\newtheorem{lemma}{Lemma}[section]
\newtheorem{thm}[lemma]{Theorem}
\newtheorem{cor}[lemma]{Corollary}
\theoremstyle{definition}
\newtheorem{example}[lemma]{Example}
\theoremstyle{remark}
\numberwithin{equation}{section} \numberwithin{table}{section}
\begin{document}

\title[Badly approximable numbers for sequences of balls]{Badly approximable numbers for sequences of balls}
\author{Simon Baker}
\address{Utrecht University, P.O Box 80125, 3508 TC Utrecht, The Netherlands. E-mail:
simonbaker412@gmail.com}

\date{\today}
\subjclass[2010]{11K60, 28A80}
\keywords{Diophantine Approximation, Geometric measure theory} 

\begin{abstract}
It is a classical result from Diophantine approximation that the set of badly approximable numbers has Lebesgue measure zero. In this paper we generalise this result to more general sequences of balls. 

Given a countable set of closed $d$-dimensional Euclidean balls $\{B(x_{i},r_{i})\}_{i=1}^{\infty},$ we say that $\alpha\in \mathbb{R}^{d}$ is a badly approximable number with respect to $\{B(x_{i},r_{i})\}_{i=1}^{\infty}$ if there exists $\kappa(\alpha)>0$ and $N(\alpha)\in\mathbb{N}$ such that $\alpha\notin B(x_{i},\kappa(\alpha)r_{i})$ for all $i\geq N(\alpha)$. Under natural conditions on the set of balls, we prove that the set of badly approximable numbers with respect to $\{B(x_{i},r_{i})\}_{i=1}^{\infty}$ has Lebesgue measure zero. Moreover, our approach yields a new proof that the set of badly approximable numbers has Lebesgue measure zero. 
\end{abstract}

\maketitle

\section{Introduction}
Diophantine approximation is primarily concerned with how well real numbers can be approximated by rationals. A classical theorem due to Dirichlet states that for any $\alpha=(\alpha_{1},\ldots,\alpha_{d})\in \mathbb{R}^{d}$ and $Q\in \mathbb{N},$ there exists $(p_{1},\ldots,p_{d})\in \mathbb{Z}^{d}$ and $1\leq q<Q^{d}$ such that
$$\max_{1\leq i\leq d}|q\alpha_{i}-p_{i}|\leq \frac{1}{Q} .$$The proof of Dirichlet's theorem is a straightforward application of the pigeon hole principle. It is an immediate consequence of Dirichlet's theorem that if $\alpha_{i}$ is irrational for some $1\leq i\leq d,$ then there are infinitely many distinct $d$-tuples $(\frac{p_{1}}{q},\ldots, \frac{p_{d}}{q})\in\mathbb{Q}^{d}$ satisfying 
\begin{equation}
\label{cor equation}
\max_{1\leq i\leq d}\Big|\alpha_{i}-\frac{p_{i}}{q}\Big|\leq \frac{1}{q^{1+{1/d}}}.
\end{equation} We say that $\alpha\in\mathbb{R}^{d}$ is a \textit{badly approximable number} if there exists $\kappa(\alpha)>0$ such that  
$$\max_{1\leq i\leq d} \Big|\alpha_{i}-\frac{p_{i}}{q}\Big|> \frac{\kappa(\alpha)}{q^{1+{1/d}}},$$for all $(p_{1},\ldots, p_{d},q)\in\mathbb{Z}^{d}\times \mathbb{N}.$ We denote the set of badly approximable numbers in $\mathbb{R}^{d}$ by $\textbf{Bad}_{d}.$ It is well known that $\mathcal{L}^{d}(\textbf{Bad}_{d})=0$ and $\dim_{H}(\textbf{Bad}_{d})=d.$ Here $\mathcal{L}^{d}(\cdot)$ denotes the $d$-dimensional Lebesgue measure. The fact that $\mathcal{L}^{d}(\textbf{Bad}_{d})=0$ is a consequence of Khintchine's theorem, see \cite[Page 60]{Schmidt book}. See \cite{Schmidt} for the Hausdorff dimension result. In this paper we generalise the result $\mathcal{L}^{d}(\textbf{Bad}_{d})=0$ to more general sequences of balls. We now give details of this generalisation.

Let $B(x,r)$ denote the closed $d$-dimensional Euclidean ball centred at $x\in\mathbb{R}^{d}$ with radius $r>0$. Given a countable set of balls$\{B(x_{i},r_{i})\}_{i=1}^{\infty}$, the lim-sup set associated to $\{B(x_{i},r_{i})\}_{i=1}^{\infty}$ is defined as: $$\limsup B(x_{i},r_{i})=\bigcap_{n=1}^{\infty}\bigcup_{i=n}^{\infty} B(x_{i},r_{i})=\Big\{\alpha\in\mathbb{R}^{d}: \alpha \textrm{ is contained in infinitely many } B(x_{i},r_{i})\Big\}.$$ In what follows we will always assume that our set of balls is such that Lebesgue almost every $\alpha\in\limsup B(x_{i},r_{i}).$  Without this assumption our analysis is degenerate. We say that $\alpha\in\mathbb{R}^{d}$ is a  \textit{badly approximable number with respect to $\{B(x_{i},r_{i})\}_{i=1}^{\infty}$} if there exists $\kappa(\alpha)>0$ such that $\alpha\notin \limsup B(x_{i},\kappa(\alpha)r_{i}).$ In other words, $\alpha$ is a badly approximable number with respect to $\{B(x_{i},r_{i})\}_{i=1}^{\infty}$ if there exists $\kappa(\alpha)>0$ and $N(\alpha)\in\mathbb{N}$ such that $\alpha\notin B(x_{i},\kappa(\alpha)r_{i})$ for all $i\geq N(\alpha).$ We denote the set of badly approximable numbers with respect to $\{B(x_{i},r_{i})\}_{i=1}^{\infty}$ by $\textbf{Bad}(\{B(x_{i},r_{i})\}_{i=1}^{\infty}),$ i.e., 
\begin{align*}
\textbf{Bad}(\{B(x_{i},r_{i})\}_{i=1}^{\infty}):=\Big\{\alpha\in \mathbb{R}^{d}:& \textrm{ there exists } \kappa(\alpha)>0, N(\alpha)\in\mathbb{N} \textrm{ such that } \alpha\notin B(x_{i},\kappa(\alpha)r_{i})\\
&\textrm{ for all } i\geq N(\alpha)\Big\}.
\end{align*} 

Before stating our main result it is necessary to introduce a technical condition. We say that $\{B(x_{i},r_{i})\}_{i=1}^{\infty}$ is \textit{shrinking locally}, if for any ball $B(y,r)$ and $\epsilon>0,$ if we let $\{\hat{B}(x_{j},r_{j})\}_{j=1}^{\infty}$ denote those elements of $\{B(x_{i},r_{i})\}_{i=1}^{\infty}$ that intersect $B(y,r),$ then there exists finitely many $\hat{B}(x_{j},r_{j})$ satisfying $r_{j}>\epsilon.$ We remark that $\{\hat{B}(x_{j},r_{j})\}_{j=1}^{\infty}$ is always infinite under the assumption almost every $\alpha\in\limsup B(x_{i},r_{i}).$ Our main result is the following.

\begin{thm}
\label{main thm}
Assume $\{B(x_{i},r_{i})\}_{i=1}^{\infty}$ is shrinking locally and almost every $\alpha\in\limsup B(x_{i},r_{i})$. Then $\mathcal{L}^{d}(\textbf{Bad}(\{B(x_{i},r_{i})\}_{i=1}^{\infty}))=0.$
\end{thm}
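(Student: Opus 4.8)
The plan is to reduce the theorem to a full-measure statement about shrunken lim-sup sets, and then prove that statement by a Lebesgue density argument that is driven by the shrinking locally hypothesis. Since membership of $\textbf{Bad}(\{B(x_{i},r_{i})\}_{i=1}^{\infty})$ only requires the existence of \emph{some} $\kappa(\alpha)>0$, and since $\mathbb{R}^{d}\setminus\limsup B(x_{i},\kappa r_{i})$ increases as $\kappa$ decreases, I would first record the identity
$$\textbf{Bad}(\{B(x_{i},r_{i})\}_{i=1}^{\infty})=\bigcup_{n=1}^{\infty}\Big(\mathbb{R}^{d}\setminus\limsup B(x_{i},\tfrac{1}{n}r_{i})\Big).$$
As a countable union of null sets is null, it then suffices to show that for each fixed $\kappa\in(0,1]$ the shrunken lim-sup set $\limsup B(x_{i},\kappa r_{i})$ has full Lebesgue measure; the case $\kappa=1$ is the hypothesis, and the case $n=1$ of the union is handled by it directly.

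Fix such a $\kappa$ and set $U_{n}:=\mathbb{R}^{d}\setminus\bigcup_{i\geq n}B(x_{i},\kappa r_{i})$, so that $\mathbb{R}^{d}\setminus\limsup B(x_{i},\kappa r_{i})=\bigcup_{n}U_{n}$ is an increasing union. Suppose for contradiction that $\mathcal{L}^{d}(U_{n})>0$ for some $n$. Because $\limsup B(x_{i},r_{i})$ has full measure and almost every point of $U_{n}$ is a Lebesgue density point, I can select $\alpha_{0}\in U_{n}$ that is simultaneously a density point of $U_{n}$ and an element of $\limsup B(x_{i},r_{i})$. Being in the lim-sup set, $\alpha_{0}$ lies in infinitely many $B(x_{i},r_{i})$; applying the shrinking locally property to a fixed ball about $\alpha_{0}$ forces all but finitely many of the corresponding radii below any prescribed threshold, so I can extract indices $i_{k}\geq n$ with $\alpha_{0}\in B(x_{i_{k}},r_{i_{k}})$ and $r_{i_{k}}\to 0$.

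The decisive step is then purely geometric. Since $i_{k}\geq n$, the shrunken ball $B(x_{i_{k}},\kappa r_{i_{k}})$ is by the very definition of $U_{n}$ disjoint from $U_{n}$; yet because $|\alpha_{0}-x_{i_{k}}|\leq r_{i_{k}}$ it is contained in $B(\alpha_{0},(1+\kappa)r_{i_{k}})$ and occupies a fixed proportion $(\kappa/(1+\kappa))^{d}$ of that ball's volume. Consequently
$$\frac{\mathcal{L}^{d}\big(U_{n}\cap B(\alpha_{0},(1+\kappa)r_{i_{k}})\big)}{\mathcal{L}^{d}\big(B(\alpha_{0},(1+\kappa)r_{i_{k}})\big)}\leq 1-\Big(\frac{\kappa}{1+\kappa}\Big)^{d}<1\qquad\text{for every }k,$$
and as $(1+\kappa)r_{i_{k}}\to 0$ this contradicts $\alpha_{0}$ being a density point of $U_{n}$. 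Hence every $U_{n}$ is null, which gives the required full-measure conclusion and completes the proof.

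I expect the main conceptual obstacle to be finding the correct formulation of the complement. Working directly with $\limsup B(x_{i},\kappa r_{i})$ is unproductive, because a point lying in a \emph{single} shrunken ball tells us nothing about membership of the lim-sup set. The resolution is to pass to the finite-level complements $U_{n}$: for these, any one shrunken ball $B(x_{i},\kappa r_{i})$ with $i\geq n$ becomes a genuine hole in $U_{n}$, and the shrinking locally hypothesis is exactly what guarantees that such holes recur at arbitrarily small scales around any density point, which is precisely the input the density theorem needs in order to force the contradiction.
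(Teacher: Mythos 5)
Your proof is correct, and while your opening decomposition is essentially the paper's (your sets $U_{m}$, for $\kappa=1/n$, are exactly the paper's sets $B(m,n)=\{\alpha: \alpha\notin B(x_{i},r_{i}/n) \textrm{ for all } i\geq m\}$), the heart of your density argument is genuinely different. The paper works at an \emph{arbitrary} point $y$ of $B(N,M)$: it uses the full-measure hypothesis and the shrinking locally property to assemble a family of small balls from the tail of the sequence that lie inside $B(y,r)$ and cover a proportion $1-\delta$ of it, then shrinks every ball in this family by the factor $1/M$ and invokes a separate measure-comparison lemma (Lemma \ref{lemma}, proved via the Vitali covering lemma) to conclude that the shrunken family still occupies a proportion $K(1/M,d)(1-\delta)$ of $B(y,r)$; this yields the uniform bound $\limsup_{r\to 0}\mathcal{L}^{d}(B(y,r)\cap B(N,M))/\mathcal{L}^{d}(B(y,r))\leq 1-K(1/M,d)$ at \emph{every} point of $B(N,M)$. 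You instead exploit the freedom to \emph{choose} the density point: since $\limsup B(x_{i},r_{i})$ has full measure, a density point $\alpha_{0}$ of $U_{n}$ can be taken inside the lim-sup set, and then a \emph{single} shrunken ball $B(x_{i_{k}},\kappa r_{i_{k}})$ at each scale is a hole in $U_{n}$ occupying the fixed proportion $(\kappa/(1+\kappa))^{d}$ of $B(\alpha_{0},(1+\kappa)r_{i_{k}})$, contradicting density along the sequence of scales $r_{i_{k}}\to 0$ supplied by the shrinking locally property. What each approach buys: yours is shorter and more elementary --- it needs no Vitali covering lemma and no analogue of Lemma \ref{lemma}, and it produces the clean explicit defect $(\kappa/(1+\kappa))^{d}$; the paper's route costs more machinery but establishes the covering-stability lemma (of independent interest, with the optimal constant $\delta$ when $d=1$) and shows that \emph{no} point of $B(N,M)$ is a density point, with a density bound valid at all sufficiently small scales rather than only along a subsequence. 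One cosmetic remark: you reuse the letter $n$ both for the scaling $\kappa=1/n$ and as the index of $U_{n}$; these are independent parameters and should carry distinct names.
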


The following corollary is an interesting consequence of Theorem \ref{main thm}.
\begin{cor}
Assume $\{B(x_{i},r_{i})\}_{i=1}^{\infty}$ is shrinking locally. Then almost every $\alpha\in\limsup B(x_{i},r_{i})$ if and only if for all $\kappa>0$ almost every $\alpha\in\limsup B(x_{i},\kappa r_{i})$
\end{cor}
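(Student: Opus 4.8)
The plan is to reduce both implications to elementary set-theoretic manipulations, with all of the substantive content being supplied by Theorem~\ref{main thm}. The reverse implication is immediate: if for every $\kappa>0$ almost every $\alpha\in\limsup B(x_{i},\kappa r_{i})$, then specialising to $\kappa=1$ gives at once that almost every $\alpha\in\limsup B(x_{i},r_{i})$. So all of the work lies in the forward implication.

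For the forward implication I would fix an arbitrary $\kappa>0$ and aim to show that $\mathcal{L}^{d}\big(\mathbb{R}^{d}\setminus\limsup B(x_{i},\kappa r_{i})\big)=0$. The key observation, which follows directly from unwinding the definitions, is the inclusion
$$\mathbb{R}^{d}\setminus\limsup B(x_{i},\kappa r_{i})\subseteq \textbf{Bad}(\{B(x_{i},r_{i})\}_{i=1}^{\infty}).$$
Indeed, if $\alpha\notin\limsup B(x_{i},\kappa r_{i})$ then $\alpha$ lies in only finitely many of the balls $B(x_{i},\kappa r_{i})$, so there exists some $N(\alpha)\in\mathbb{N}$ with $\alpha\notin B(x_{i},\kappa r_{i})$ for all $i\geq N(\alpha)$; taking $\kappa(\alpha)=\kappa$ in the definition of $\textbf{Bad}$ establishes the inclusion.

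With this inclusion in hand I would invoke Theorem~\ref{main thm}. The standing hypothesis of the corollary guarantees that $\{B(x_{i},r_{i})\}_{i=1}^{\infty}$ is shrinking locally, and the hypothesis of the forward implication is precisely that almost every $\alpha\in\limsup B(x_{i},r_{i})$; these are exactly the two assumptions required by Theorem~\ref{main thm}, which therefore yields $\mathcal{L}^{d}(\textbf{Bad}(\{B(x_{i},r_{i})\}_{i=1}^{\infty}))=0$. Combined with the inclusion above, this forces $\mathcal{L}^{d}\big(\mathbb{R}^{d}\setminus\limsup B(x_{i},\kappa r_{i})\big)=0$, i.e.\ almost every $\alpha\in\limsup B(x_{i},\kappa r_{i})$. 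Since $\kappa>0$ was arbitrary, the forward implication follows.

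I do not expect a genuine obstacle here: the entire difficulty has been absorbed into Theorem~\ref{main thm}, and the corollary is essentially a repackaging of that theorem together with the elementary observation that failing to belong to the lim-sup set $\limsup B(x_{i},\kappa r_{i})$ is the same as being badly approximable at the fixed rate $\kappa$. The only point requiring minor care is verifying that the inclusion holds for \emph{every} $\kappa>0$, not merely for $\kappa<1$, so that the single application of Theorem~\ref{main thm} covers all scales uniformly.
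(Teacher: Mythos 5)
Your proof is correct and is essentially the argument the paper intends: the paper states the corollary without proof as a direct consequence of Theorem~\ref{main thm}, and your deduction --- the complement of $\limsup B(x_{i},\kappa r_{i})$ lies in $\textbf{Bad}(\{B(x_{i},r_{i})\}_{i=1}^{\infty})$ by taking $\kappa(\alpha)=\kappa$, so Theorem~\ref{main thm} forces it to be null --- is exactly that consequence spelled out. The reverse implication via $\kappa=1$ and the remark that the inclusion needs no restriction to $\kappa<1$ are both handled properly.
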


The following example demonstrates that the shrinking locally property is in fact essential. 
\begin{example}
Let $(x_{i})_{i=1}^{\infty}$ be an infinite sequence whose entries are all elements of $\mathbb{Z}^{2},$ moreover assume that every element of $\mathbb{Z}^{2}$ occurs infinitely often in $(x_{i})_{i=1}^{\infty}$. Consider the set of balls $\{B(x_{i},2)\}_{i=1}^{\infty},$ this set is not shrinking locally and $\limsup B(x_{i},2)=\mathbb{R}^{2}.$  It is obvious that $\limsup B(x_{i},2\kappa)$ does contain almost every $\alpha\in \mathbb{R}^{2}$ for $\kappa$ sufficiently small, in fact $\textbf{Bad}(\{B(x_{i},2)\}_{i=1}^{\infty})=\mathbb{R}^{2}\setminus \mathbb{Z}^{2}$. In which case the conclusion of Theorem \ref{main thm} definitely does not hold.
\end{example}
To prove Theorem \ref{main thm} it would suffice to prove an analogous result with  $\textbf{Bad}(\{B(x_{i},r_{i})\}_{i=1}^{\infty})$ replaced by the set of $\alpha \in \mathbb{R}^{d}$ for which there exists $\kappa(\alpha)>0$ such that  $\alpha\notin B(x_{i},\kappa(\alpha)r_{i})$ for all $i\in\mathbb{N}.$ This follows from a straightforward argument. However, we choose not to make use of this fact and stick with our original definition of $\textbf{Bad}(\{B(x_{i},r_{i})\}_{i=1}^{\infty}).$ The proof is marginally more complicated but the author believes that this approach is more instructive.

In the statement of Theorem \ref{main thm} we could replace the set of balls with a set of cubes or other $d$-dimensional objects and still have the same conclusion. The reason we chose to phrase Theorem \ref{main thm} in terms of balls is that it will simplify some of our later exposition.

The proof of Theorem \ref{main thm} will be given in Section $2$. In Section $3$ we explain how Theorem \ref{main thm} recovers the aforementioned result from Diophantine approximation that the set of badly approximable numbers has Lebesgue measure zero.

\section{Proof of Theorem \ref{main thm}}
The proof of Theorem \ref{main thm} will make use of techniques from geometric measure theory. In particular, we will use the Lebesgue density theorem and the Vitali covering lemma. The Lebesgue density theorem states that if $E\subset \mathbb{R}^{d}$ is a measurable set, then for almost every $x\in E$ $$\lim_{r\to 0}\frac{\mathcal{L}^{d}(E\cap B(x,r))}{\mathcal{L}^{d}(B(x,r))}=1.$$ It is a consequence of the Lebesgue density theorem that if $\limsup_{r\to 0}\frac{\mathcal{L}^{d}(E\cap B(x,r))}{\mathcal{L}^{d}(B(x,r))}<1,$ for all $x\in E,$ then $\mathcal{L}^{d}(E)=0.$ This will be the strategy we employ when we show the Lebesgue measure of $\textbf{Bad}(\{B(x_{i},r_{i})\}_{i=1}^{\infty})$ is zero. The Vitali covering lemma states that if $\{B(x_{i},r_{i})\}_{i=1}^{M}$ is a finite set of balls, then it has a subset of disjoint balls $\{\hat{B}(x_{j},r_{j})\}_{j=1}^{N}$ such that $\cup_{i=1}^{M} B(x_{i},r_{i})\subseteq \cup_{j=1}^{N}\hat{B}(x_{j},3r_{j}).$ Proofs of the Lebesgue density theorem and the Vitali covering lemma can be found in \cite{Falconer}. We now prove one technical lemma before giving our proof of Theorem \ref{main thm}.

\begin{lemma}
\label{lemma}
 Let $\{B(x_{i},r_{i})\}_{i=1}^{\infty}$ be a finite or infinite countable set of $d$-dimensional Euclidean balls and $0<\delta\leq 1.$ Then $$\mathcal{L}^{d}\Big(\bigcup_{i=1}^{\infty}B(x_{i},\delta r_{i})\Big)\geq K(\delta,d)\mathcal{L}^{d}\Big(\bigcup_{i=1}^{\infty}B(x_{i},r_{i})\Big).$$ Where $K(\delta,d)$ is some strictly positive constant depending only on $\delta$ and $d.$
\end{lemma}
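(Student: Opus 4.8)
The plan is to reduce the union over balls of possibly different radii to the disjoint, single-radius case handled by the Vitali covering lemma, and then observe that shrinking each ball by a factor $\delta$ costs only a fixed multiplicative constant depending on $\delta$ and $d$. Since Lebesgue measure is inner regular and continuous from below, it suffices to prove the inequality for a finite collection $\{B(x_i,r_i)\}_{i=1}^{M}$; the infinite case then follows by letting $M\to\infty$ on both sides, noting that $K(\delta,d)$ does not depend on $M$.

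For the finite case, I would apply the Vitali covering lemma to $\{B(x_i,r_i)\}_{i=1}^{M}$ to extract a disjoint subcollection $\{\hat B(x_j,r_j)\}_{j=1}^{N}$ with
\begin{equation}
\label{vitali inclusion}
\bigcup_{i=1}^{M}B(x_i,r_i)\subseteq \bigcup_{j=1}^{N}\hat B(x_j,3r_j).
\end{equation}
Taking measures in \eqref{vitali inclusion} and using disjointness of the $\hat B(x_j,r_j)$ gives
\begin{equation}
\label{upper bound}
\mathcal{L}^{d}\Big(\bigcup_{i=1}^{M}B(x_i,r_i)\Big)\leq \sum_{j=1}^{N}\mathcal{L}^{d}(\hat B(x_j,3r_j))=3^{d}\sum_{j=1}^{N}\mathcal{L}^{d}(\hat B(x_j,r_j)).
\end{equation}
On the other side, since the $\hat B(x_j,r_j)$ are disjoint, so are the smaller concentric balls $\hat B(x_j,\delta r_j)$, and each of these is contained in $\bigcup_{i=1}^{M}B(x_i,\delta r_i)$ because each $\hat B(x_j,r_j)$ is one of the original $B(x_i,r_i)$. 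Hence
\begin{equation}
\label{lower bound}
\mathcal{L}^{d}\Big(\bigcup_{i=1}^{M}B(x_i,\delta r_i)\Big)\geq \sum_{j=1}^{N}\mathcal{L}^{d}(\hat B(x_j,\delta r_j))=\delta^{d}\sum_{j=1}^{N}\mathcal{L}^{d}(\hat B(x_j,r_j)).
\end{equation}

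Combining \eqref{upper bound} and \eqref{lower bound} to eliminate the common sum $\sum_{j}\mathcal{L}^{d}(\hat B(x_j,r_j))$ yields
$$\mathcal{L}^{d}\Big(\bigcup_{i=1}^{M}B(x_i,\delta r_i)\Big)\geq \frac{\delta^{d}}{3^{d}}\,\mathcal{L}^{d}\Big(\bigcup_{i=1}^{M}B(x_i,r_i)\Big),$$
so one may take $K(\delta,d)=(\delta/3)^{d}$, which is strictly positive and independent of $M$. I do not anticipate a serious obstacle here; the only point requiring care is the bookkeeping that the disjoint subcollection produced by Vitali consists of genuine members of the original family (so that their $\delta$-scalings really do sit inside $\bigcup_i B(x_i,\delta r_i)$), and the passage from the finite to the infinite union via continuity from below. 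The factor $3^{d}$ is an artifact of the constant $3$ in the Vitali lemma and is not optimised.
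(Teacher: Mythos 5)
Your proof is correct and is essentially identical to the paper's own argument for general $d$: reduce to finitely many balls by continuity from below, apply the Vitali covering lemma to extract a disjoint subcollection, and compare both unions to the sum $\sum_{j}\mathcal{L}^{d}(\hat B(x_j,r_j))$ via the scaling factors $3^{d}$ and $\delta^{d}$, arriving at the same constant $K(\delta,d)=(\delta/3)^{d}$. The only thing the paper does beyond this is give a separate, more elementary argument in the case $d=1$ (tracking connected components of the union of shrunk intervals) that achieves the optimal constant $K(\delta,1)=\delta$ without Vitali, but that refinement is not needed for the applications.
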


The proof of Lemma \ref{lemma} will be split into two parts. First of all we prove the statement for $d=1$ and then for $d\geq 1$. The proof of Lemma \ref{lemma} for $d\geq 1$ is when we use the Vitali covering lemma. The reason for two proofs of the case when $d=1$ is our initial method yields an optimal value for $K(\delta,1)$ and does not require the Vitali covering lemma. 
\begin{proof}[Proof of Lemma \ref{lemma} when $d=1$]
As the Lebesgue measure is continuous from below it is sufficent to prove this lemma only in the case of a finite set of balls. Let $\{B(x_{i},r_{i})\}_{i=1}^{M}$ be such a finite set. Consider $\cup_{i=1}^{M} B(x_{i},\delta r_{i}),$ this set is made up of finitely many disjoint connected components, we denote the set of these components by $\{C_{l}\}_{l=1}^{n}.$ Clearly $C_{l}=\cup_{j=1}^{N_{l}} B(x_{j}^{l},\delta r_{j}^{l})$ for some set of balls $\{B(x_{j}^{l},\delta r_{j}^{l})\}_{j=1}^{N_{l}}.$ If $\cup_{j=1}^{N_{l}} B(x_{j}^{l},\delta r_{j}^{l})$ is a component then $\cup_{j=1}^{N_{l}} B(x_{j}^{l},r_{j}^{l})$ is an interval. This is clear as we are considering larger balls. Let $B(x_{j'}^{l},r_{j'}^{l})$ and $B(x_{j''}^{l},r_{j''}^{l})$ be such that $\cup_{j=1}^{N_{l}} B(x_{j}^{l},r_{j}^{l})=[x_{j'}^{l}-r_{j'}^{l},x_{j''}^{l}+r_{j''}^{l}].$ It is a straightforward exercise to show that $x_{j'}^{l}\leq x_{j''}^{l}.$ Both $x_{j'}^{l}-\delta r_{j'}^{l}, x_{j''}^{l}+\delta r_{j''}^{l}\in C_{l},$ and as $C_{l}$ is a connected component in $\mathbb{R}$ and therefore an interval we have 
\begin{equation}
\label{equation 1}
\mathcal{L}^{d}(C_{l})\geq (x_{j''}^{l}-x_{j'}^{l})+\delta(r_{j''}^{l}-r_{j'}^{l})\geq \delta(x_{j''}^{l}-x_{j'}^{l}+r_{j''}^{l}-r_{j'}^{l})=\delta \mathcal{L}^{d}\Big(\bigcup_{j=1}^{N_{l}} B(x_{j}^{l},r_{j}^{l})\Big).
\end{equation} Making use of the estimate provided by (\ref{equation 1}) we observe the following: 
\begin{align*}
\mathcal{L}^{d}\Big(\bigcup_{i=1}^{M}B(x_{i}, r_{i})\Big)=\mathcal{L}^{d}\Big(\bigcup_{l=1}^{n}\bigcup_{j=1}^{N_{l}}B(x_{j}^{l},r_{j}^{l})\Big)&\leq  \sum_{l=1}^{n} \mathcal{L}^{d}\Big( \bigcup_{j=1}^{N_{l}}B(x_{j}^{l},r_{j}^{l})\Big)\\
&\leq \frac{1}{\delta}\sum_{l=1}^{n}  \mathcal{L}^{d}(C_{l})\\
& = \frac{1}{\delta} \mathcal{L}^{d}\Big(\bigcup_{i=1}^{M} B(x_{i},\delta r_{i})\Big).
\end{align*}In the final equality we have used the fact that the $C_{l}$'s are disjoint and their union equals $\cup_{i=1}^{M} B(x_{i},\delta r_{i}).$ It follows that in the case where $d=1$ we can take $K(\delta,1)=\delta.$
\end{proof}To see that $\delta$ is the optimal value for $K(\delta,1)$ consider the case of a single ball. 
\begin{proof}[Proof of Lemma \ref{lemma} when $d\geq 1$]
As in the case where $d=1$ it is sufficent to prove our result for a finite sets of balls. Let $\{B(x_{i},r_{i})\}_{i=1}^{M}$ be a finite set of balls and $\{\hat{B}(x_{j},r_{j})\}_{j=1}^{N}$ be the disjoint subset of balls guaranteed by the Vitali covering lemma. This subset satisfies $\cup_{i=1}^{M} B(x_{i},r_{i})\subseteq \cup_{j=1}^{N}\hat{B}(x_{j},3r_{j})$. Therefore
\begin{align*}
\mathcal{L}^{d}\Big(\bigcup_{i=1}^{M} B(x_{i},r_{i})\Big)\leq \mathcal{L}^{d}\Big(\bigcup_{j=1}^{N}\hat{B}(x_{j},3r_{j})\Big)&\leq \sum_{j=1}^{N}\mathcal{L}^{d}(\hat{B}(x_{j},3r_{j}))\\
&\leq \Big(\frac{3}{\delta}\Big)^{d}\sum_{j=1}^{N}\mathcal{L}^{d}(\hat{B}(x_{j},\delta r_{j}))\\
& = \Big(\frac{3}{\delta}\Big)^{d} \mathcal{L}^{d}\Big(\bigcup_{j=1}^{N}\hat{B}(x_{j},\delta r_{j})\Big)\\
&\leq \Big(\frac{3}{\delta}\Big)^{d}\mathcal{L}^{d}\Big(\bigcup_{i=1}^{M} B(x_{i},\delta r_{i})\Big).
\end{align*}In the above we have used the fact that $\bigcup_{j=1}^{N}\hat{B}(x_{j},\delta r_{j})$ is a disjoint union. Taking $K(\delta,d)=(\frac{\delta}{3})^{d}$ yields our result.
\end{proof}
We anticipate that $\delta^{d}$ will in fact be the optimal value for $K(\delta,d)$ for all $d\geq 1$. Now we have proved Lemma \ref{lemma} we are able to prove Theorem \ref{main thm}.
\begin{proof}[Proof of Theorem \ref{main thm}]
To begin with we introduce the following collection of sets, given $M,N\in\mathbb{N}$ let $$B(N,M)=\Big\{\alpha\in \mathbb{R}^{d}: \textrm{ for all } i\geq N \textrm{ we have }\alpha\notin B\Big(x_{i},\frac{r_{i}}{M}\Big)\Big\}.$$
It is obvious that $$\textbf{Bad}(\{B(x_{i},r_{i})\}_{i=1}^{\infty})=\bigcup_{N=1}^{\infty}\bigcup_{M=1}^{\infty}B(N,M).$$ Therefore to show $\mathcal{L}^{d}(\textbf{Bad}(\{B(x_{i},r_{i})\}_{i=1}^{\infty}))=0$ it is sufficient to prove $\mathcal{L}^{d}(B(N,M))=0$ for any $N,M\geq 1.$ We show this to be the case via an application of the Lebesgue density theorem. Let us now fix $N,M\geq 1,$ $y\in B(N,M)$ and $r>0.$ We now state three properties of the set $\{B(x_{i},r_{i})\}_{i=1}^{\infty}.$ These properties will allow us to obtain a useful subset of $\{B(x_{i},r_{i})\}_{i=1}^{\infty}$. Each element of this subset when scaled by a factor $M^{-1}$ will be contained in $B(N,M)^{c}\cap B(y,r),$ and the Lebesgue measure of the union of the balls in this subset will be comparable to the that of $B(y,r).$

\begin{itemize}
\item For each $L\geq 1$ we have $\cup_{i=L}^{\infty}B(x_{i},r_{i})$ equals $\mathbb{R}^{d}$ up to a set of measure zero. This is obvious as $\limsup B(x_{i},r_{i})$ equals $\mathbb{R}^{d}$ up to a set of measure zero.
\vspace{3.6mm}
\item For each $L\geq N$ we have $\bigcup_{i=L}^{\infty}B(x_{i},\frac{r_{i}}{M})\subset B(N.M)^{c}.$
\vspace{3.6mm}
\item For $L\geq 1$ let $\{\hat{B}^{L}(x_{j},r_{j})\}_{j=1}^{\infty}$ denote those elements of $\{B(x_{i},r_{i})\}_{i=L}^{\infty}$ which intersect $B(y,r).$ Given an $\epsilon>0,$ then by the shrinking locally property we can pick $L$ sufficiently large such that $\hat{B}^{L}(x_{j},r_{j})$ has radius less than $\epsilon$ for every $j\in \mathbb{N}.$
\end{itemize}

Let $\delta>0$ be some arbitrary positive constant. Using the above properties of $\{B(x_{i},r_{i})\}_{i=1}^{\infty},$ we may assert that by considering $L$ sufficiently large we can choose a subset of $\{\hat{B}^{L}(x_{j},r_{j})\}_{j=1}^{\infty},$ which we shall denote by $\{\tilde{B}(x_{k},r_{k})\}_{k=1}^{\infty},$ with the following properties:
\begin{itemize}
 \item $(1-\delta)\mathcal{L}^{d}(B(y,r))\leq \mathcal{L}^{d}(\bigcup_{k=1}^{\infty}\tilde{B}(x_{k},r_{k}))\leq \mathcal{L}^{d}(B(y,r)).$
\vspace{3.6mm}
\item $\tilde{B}(x_{k},\frac{r_{k}}{M})\subset B(N,M)^{c}$ for all $k\geq 1.$
\vspace{3.6mm}
\item $\tilde{B}(x_{k},r_{k})\subset B(y,r)$ for all $k\geq 1.$
\end{itemize}

We now use the above properties of $\{\tilde{B}(x_{k},r_{k})\}_{k=1}^{\infty}$ and Lemma \ref{lemma} to obtain the following estimate on the density of $B(N,M)$ at $y:$ 

\begin{align*}
\frac{\mathcal{L}^{d}(B(y,r)\cap B(N,M))}{\mathcal{L}^{d}(B(y,r))}&=1-\frac{\mathcal{L}^{d}(B(y,r)\cap B(N,M)^{c})}{\mathcal{L}^{d}(B(y,r))}\\
&\leq 1-\frac{\mathcal{L}^{d}(\cup_{k=1}^{\infty}\tilde{B}(x_{k},\frac{r_{k}}{M}))}{\mathcal{L}^{d}(B(y,r))}\\
&\leq 1-\frac{K(\frac{1}{M},d)\mathcal{L}^{d}(\cup_{k=1}^{\infty}\tilde{B}(x_{k},r_{k}))}{\mathcal{L}^{d}(B(y,r))}\\
&\leq 1-\frac{K(\frac{1}{M},d)(1-\delta)\mathcal{L}^{d}(B(y,r))}{\mathcal{L}^{d}(B(y,r))}\\
&= 1-K\Big(\frac{1}{M},d\Big)(1-\delta).
\end{align*}As $\delta$ was arbitrary we have that $\limsup_{r\to 0}\frac{\mathcal{L}^{d}(B(y,r)\cap B(N,M))}{\mathcal{L}^{d}(B(y,r))}$ is always at most $1-K(\frac{1}{M},d).$ Which by the Lebesgue density theorem implies $\mathcal{L}^{d}(B(N,M))=0,$ and by our earlier remarks we may conclude $\mathcal{L}^{d}(\textbf{Bad}(\{B(x_{i},r_{i})\}_{i=1}^{\infty}))=0.$
\end{proof}

\section{Recovering $\mathcal{L}^{d}(\textbf{Bad}_{d})=0$}
Showing Theorem \ref{main thm} implies $\mathcal{L}^{d}(\textbf{Bad}_{d})=0$ is fairly straightforward, we include the argument for completion.

Let $C(x,r)$ denote the $d$-dimensional Euclidean cube centred at $x$ with side length $2r.$ Then 
$$\textbf{Bad}_{d}=\bigcup_{n=1}^{\infty}\Big\{\alpha\in\mathbb{R}^{d}: \alpha\notin C\Big(\Big(\frac{p_{1}}{q},\ldots, \frac{p_{d}}{q}\Big),\frac{1}{nq^{1+1/d}}\Big) \textrm{ for all } (p_{1},\ldots,p_{d},q)\in \mathbb{Z}^{d}\times \mathbb{N}\Big\}.$$
The following inclusions are well known: $B(x,r)\subset C(x,r)\subset B(x,\sqrt{d}r).$ These inclusions allow us to interpret $\textbf{Bad}_{d}$ in terms of balls instead of cubes: 
\begin{equation}
\label{inclusion}
\textbf{Bad}_{d}=\bigcup _{n=1}^{\infty}\Big\{\alpha\in \mathbb{R}^{d}: \alpha\notin B\Big(\Big(\frac{p_{1}}{q},\ldots ,\frac{p_{d}}{q}\Big),\frac{\sqrt{d}}{nq^{1+1/d}}\Big) \textrm{ for all }(p_{1},\ldots,p_{d},q)\in \mathbb{Z}^{d}\times \mathbb{N}\Big\}.
\end{equation}
Consider the set of balls $\{B((\frac{p_{1}}{q},\ldots, \frac{p_{d}}{q}),\frac{\sqrt{d}}{q^{1+1/d}})\}_{(p_{1},\ldots,p_{d},q)\in \mathbb{Z}^{d}\times \mathbb{N}}.$ This set of balls clearly satisfies the shrinking locally property. It is a consequence of the corollary of Dirichlet's theorem stated at the beginning, see (\ref{cor equation}), and $C((\frac{p_{1}}{q},\ldots, \frac{p_{d}}{q}),\frac{1}{q^{1+1/d}})\subset B((\frac{p_{1}}{q},\ldots, \frac{p_{d}}{q}),\frac{\sqrt{d}}{q^{1+1/d}}),$ that almost every $\alpha\in \limsup B((\frac{p_{1}}{q},\ldots, \frac{p_{d}}{q}),\frac{\sqrt{d}}{q^{1+1/d}})$. Therefore Theorem \ref{main thm} applies and $\mathcal{L}^{d}(\textbf{Bad}(\{B((\frac{p_{1}}{q},\ldots, \frac{p_{d}}{q}),\frac{\sqrt{d}}{q^{1+1/d}})\}_{(p_{1},\ldots,p_{d},q)\in \mathbb{Z}^{d}\times \mathbb{N}}))=0.$ 

Let us recall that $\textbf{Bad}(\{B((\frac{p_{1}}{q},\ldots, \frac{p_{d}}{q}),\frac{\sqrt{d}}{q^{1+1/d}}\})$ is defined to be the set of $\alpha\in\mathbb{R}^{d}$ for which there exists $\kappa(\alpha)>0$ such that $\alpha$ is in finitely many $B((\frac{p_{1}}{q},\ldots, \frac{p_{d}}{q}),\frac{\kappa(\alpha)\sqrt{d}}{q^{1+1/d}}).$ Therefore, by (\ref{inclusion}) we have $\textbf{Bad}_{d}\subset \textbf{Bad}(\{B((\frac{p_{1}}{q},\ldots, \frac{p_{d}}{q}),\frac{\sqrt{d}}{q^{1+1/d}})\})$ and $\mathcal{L}^{d}(\textbf{Bad}_{d})=0.$ It is not a difficult exercise to prove that in fact $\textbf{Bad}_{d}= \textbf{Bad}(\{B((\frac{p_{1}}{q},\ldots, \frac{p_{d}}{q}),\frac{\sqrt{d}}{q^{1+1/d}})\}).$ This is a consequence of the fact that if $\alpha\in \textbf{Bad}(\{B((\frac{p_{1}}{q},\ldots, \frac{p_{d}}{q}),\frac{\sqrt{d}}{q^{1+1/d}})\})$ then it cannot be in $\mathbb{Q}^{d},$ in which case by considering smaller $\kappa(\alpha)$ if necessary we have that $\alpha\notin B((\frac{p_{1}}{q},\ldots, \frac{p_{d}}{q}),\frac{\kappa(\alpha)\sqrt{d}}{q^{1+1/d}})$ for all $(p_{1},\ldots,p_{d},q)\in \mathbb{Z}^{d}\times \mathbb{N}.$ For a general $\{B(x_{i},r_{i})\}_{i=1}^{\infty}$ we do not necessarily have this equality.
\\

\noindent \textbf{Acknowledgements} This work was initiated at the school on \textit{Dynamics and Number theory} at Durham university. The author would like to thank Lior Fishman and Sanju Velani for useful discussions. This research was supported by the Dutch Organisation for Scientiﬁc Research (NWO) grant number 613.001.022.


\begin{thebibliography}{100}
\bibitem{Falconer} K. Falconer, \textit{The geometry of fractal sets,} Cambridge Tracts in Mathematics, 85. Cambridge University Press, Cambridge, 1986. ISBN: 0-521-25694-1; 0-521-33705-4 
\bibitem{Schmidt} W. Schmidt, \textit{Badly approximable systems of linear forms,} J. Number Theory 1 1969 139--154.
\bibitem{Schmidt book} W. Schmidt, \textit{Diophantine Approximation,} Lecture Notes in Mathematics, 785. Springer, Berlin, 1980. x+299 pp. ISBN: 3-540-09762-7 
\end{thebibliography}
\end{document}